\renewcommand{\raggedright}{\leftskip=0pt \rightskip=0pt plus 0cm}
\newtheorem{thm}{Theorem}[section]
\newtheorem*{thm*}{Theorem}
\newtheorem{lem}[thm]{Lemma}
\newtheorem{exam}[thm]{Example}
 \def\lg{\langle} \def\rg{\rangle}
\def\nd{\mathrel{\bigm|\kern-.7em/}}
\def\f{\noindent}
\def\mod{\hbox{\rm mod }}
\def\qed{\hfill $\Box$}
\begin{document}

\title{A note on the bound for the class of certain nilpotent groups\thanks{This
work was supported by NSFC (No. 12271318).}}

\author{Jixia Gao$^{1}$, Haipeng Qu$^{2}$\thanks{corresponding author}\\
\footnotesize\em $1$. School of Mathematics and Statistics, Lanzhou University,\\
\footnotesize\em Lanzhou, Gansu 730000, P. R. China\\
\footnotesize\em e-mail: gaojx19@lzu.edu.cn\\
\footnotesize\em $2.$ School of Mathematics and Computer Science, Shanxi Normal University, \\
\footnotesize\em Taiyuan,  Shanxi 030032, P. R. China\\
\footnotesize\em e-mail: orcawhale@163.com}
\date{}
\maketitle

\begin{abstract}
Assume $G$ is a nilpotent group of class $>3$ in which every proper subgroup has class at most $3$. In this note, we give the exact upper bound of class of $G$.

\medskip

\noindent{\bf Keywords:}
nilpotent groups; nilpotency class;  the bound of nilpotency class

\medskip

\noindent
\textbf{2020 Mathematics Subject Classification:} 20D15
\end{abstract}
\baselineskip=16pt
\section{Introduction}
Evans and Sandor in \cite{ES} addressed the following

\smallskip
{\bf Question 1.} Let $n$ be a positive integer. Does there exist a nilpotent group $H_n$ of class
$2n$ in which every proper subgroup has class at most $n$?
\smallskip

If such a group $H_n$ exists it must be finite (see \cite[Theorem 3.3]{NW} or \cite[Theorem 1]{Za})
and $H_n$ must be a $p$-group for some a prime $p$ (see \cite[Theorem 1.2]{CK}). Thus Question 1 can be restated as follows.

\smallskip
{\bf Question 1$^{\prime}$.} Let $n$ be a positive integer. Does there exist a finite $p$-group $H_n$ of class
$2n$ in which every proper subgroup has class at most $n$?
\smallskip

Evans and Sandor in \cite{ES} proved that the answer to Question 1$^{\prime}$ is `yes' when $p>2n$ and $n\geqslant3$. A nature question is: when $p\leqslant 2n$, what can be said about Question 1$^{\prime}$? If $n=1$, then $H_1$ is a finite nonabelian $p$-group in which every proper subgroup is abelian. $H_1$ was classified by R\'{e}dei in \cite{minimal nonabelian}. For each prime $p$ his classification contains infinitely many $p$-groups. Hence the answer to Question 1$^{\prime}$ is `yes' if $n=1$.
In contrast the answer is `no' for $n=2$ (see \cite[Corollary 1 to Theorem 1]{p>5} or \cite[Theorem 3.3]{B2}). If $n=3$, then the answer to Question 1$^{\prime}$ is `yes' when $p\geqslant5$ (see \cite[Theorem 4]{p>5}). However, we prove in this note that if $n=3$ then the answer to Question 1$^{\prime}$ is `no' when $p=2$ or $3$. In this case, we prove $c(G)\leqslant 5$ for a $p$-group $G$ in which every proper subgroup has class at most $3$. And examples are provided  to show this bound is best possible. Based on such observation, we propose the following

\smallskip
{\bf Conjecture.} Let $n$ be a positive integer with $n\geqslant 4$ and $p=2$ or $3$. Then $c(G)<2n$ for a $p$-group $G$ in which every proper subgroup has class at most $n$.

\section{Preliminaries}

Let $G$ be a finite $p$-group. We use $c(G)$, $\exp(G)$ and $d(G)$ to denote the nilpotency class, the exponent and the minimal cardinality of generating set of $G$, respectively. We use ${\rm C}_{p^n}$ and ${\rm C}_{p^n}^m$ to denote the cyclic group of order $p^n$ and the direct product of $m$ copies of ${\rm C}_{p^n}$, respectively. For a nilpotent group $G$, let
$$G=K_1(G)> G'=K_2(G)> K_3(G)> \cdots> K_{c+1}(G)=1$$
be the lower central series of $G$, where $c=c(G)$ and $K_{i+1}(G)=[K_i(G),G]$ for $1\leqslant i\leqslant c$. A finite group $G$ is said to be metabelian if $G''=1$.

\begin{lem}\label{b1}{\rm (\cite[Proposition 2.1.1]{Qu})}~Let $G$ be a finite group and $x,y,z\in G$. Then

\vspace{0.1cm}

$(1)~[x,yz]=[x,z][x,y][x,y,z];$

\vspace{0.1cm}

$(2)~[xy,z]=[x,z][x,z,y][y,z];$

\vspace{0.1cm}

$(3)~[x,y]^{-1}=[y,x]=[x,y^{-1}]^y=[x^{-1},y]^x$.
\end{lem}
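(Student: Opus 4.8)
The plan is to verify each of the three identities by direct computation from the definition of the commutator, $[a,b]=a^{-1}b^{-1}ab$, using the conjugation convention $a^b=b^{-1}ab$ and the left-normed convention $[x,y,z]=[[x,y],z]$. All three are purely formal identities in the free group on the generators involved, so no structural hypothesis on $G$ is needed: it suffices to expand both sides as words in $x,y,z$ and their inverses and check that they reduce to the same reduced word.

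For part (3) I would begin with the shortest computations. Writing $[x,y]^{-1}=(x^{-1}y^{-1}xy)^{-1}=y^{-1}x^{-1}yx$ gives $[y,x]$ at once. For the remaining two expressions I would expand $[x,y^{-1}]=x^{-1}yxy^{-1}$ and conjugate by $y$, and expand $[x^{-1},y]=xy^{-1}x^{-1}y$ and conjugate by $x$; in each case the conjugating letters cancel and the word collapses to $y^{-1}x^{-1}yx=[y,x]$. Chaining these equalities together yields the full string.

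For parts (1) and (2) the method is identical but the bookkeeping is longer. For (1) I would expand the left side as $[x,yz]=x^{-1}z^{-1}y^{-1}xyz$, then expand the right side $[x,z][x,y][x,y,z]$ after substituting $[x,y,z]=y^{-1}x^{-1}yx\,z^{-1}x^{-1}y^{-1}xy\,z$; successive cancellation of adjacent inverse pairs reduces it to exactly $x^{-1}z^{-1}y^{-1}xyz$. Part (2) is handled symmetrically: expand $[xy,z]=y^{-1}x^{-1}z^{-1}xyz$ and reduce $[x,z][x,z,y][y,z]$, with $[x,z,y]=z^{-1}x^{-1}zx\,y^{-1}x^{-1}z^{-1}xz\,y$, to the same word.

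Since every step is mechanical, the only real obstacle is keeping the long chains of cancellations straight; there is no conceptual difficulty, and indeed the identities hold in every group. This is precisely why the statement is quoted here as a black box to be invoked repeatedly in the subsequent commutator manipulations.
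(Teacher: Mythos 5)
Your verification is correct: all three identities are formal consequences of the definition $[a,b]=a^{-1}b^{-1}ab$ together with the conventions $a^b=b^{-1}ab$ and $[x,y,z]=[[x,y],z]$, and each of your expansions and cancellations checks out (e.g.\ for (1), $[x,z][x,y][x,y,z]$ collapses to $x^{-1}z^{-1}y^{-1}xyz=[x,yz]$ exactly as you describe). The paper itself supplies no proof of this lemma --- it is quoted as a black box from the cited reference --- and your direct free-group computation is precisely the standard argument, so there is nothing substantive to compare.
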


\begin{lem}\label{p2}{\rm (\cite[Proposition 2.1.3]{Qu})}~Let $G$ be a group. n is a positive integer and $a_1,\cdots,a_i,$$\cdots, a_n,b_i\in G,~{\rm where}~1\leqslant i\leqslant n$. Then

\vspace{0.1cm}

$(1)~[a_1,\cdots,a_ib_i,\cdots, a_n]\equiv [a_1,\cdots,a_i,\cdots,a_n][a_1,\cdots,b_i,\cdots,a_n]~(\mod K_{n+1}(G));$

\vspace{0.1cm}

$(2)~[a_1^{i_1},\cdots,a_n^{i_n}]\equiv [a_1,\cdots,a_n]^{i_1\cdots i_n}~(\mod K_{n+1}(G)),$ where $i_1,\cdots,i_n$ are any integers.
\end{lem}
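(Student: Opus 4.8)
The plan is to establish (1) first and then deduce (2) from it by a standard multilinearity argument. Throughout I use the left-normed convention $[a_1,\dots,a_n]=[[\,\dots[a_1,a_2],a_3],\dots,a_n]$, the commutator identities of Lemma~\ref{b1}, and the containments $[K_r(G),K_s(G)]\subseteq K_{r+s}(G)$, which I take as known. The whole argument is commutator bookkeeping modulo the normal subgroup $K_{n+1}(G)$: I track each correction term and verify it lands in $K_{n+1}(G)$.

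For (1) I would induct on $n$, proving all positions $1\le i\le n$ simultaneously. The base case $n=2$ is direct: for $i=1$ expand $[a_1b_1,a_2]$ by Lemma~\ref{b1}(2) and discard the weight-$3$ term $[a_1,a_2,b_1]\in K_3(G)$; for $i=2$ expand $[a_1,a_2b_2]$ by Lemma~\ref{b1}(1), discard $[a_1,a_2,b_2]\in K_3(G)$, and reorder the two surviving factors $[a_1,a_2],[a_1,b_2]\in K_2(G)$, which commute modulo $[K_2(G),K_2(G)]\subseteq K_4(G)\subseteq K_3(G)$. For the inductive step ($n\ge 3$) and $i=n$, I set $c=[a_1,\dots,a_{n-1}]\in K_{n-1}(G)$ and expand $[c,a_nb_n]$ by Lemma~\ref{b1}(1): the error $[c,a_n,b_n]$ lies in $K_{n+1}(G)$, and the factors $[c,a_n],[c,b_n]\in K_n(G)$ commute modulo $K_{2n}(G)\subseteq K_{n+1}(G)$. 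For $i<n$ I write $w=[a_1,\dots,a_ib_i,\dots,a_{n-1}]$ and apply the induction hypothesis at length $n-1$ to get $w=uvk$ with $u=[a_1,\dots,a_{n-1}]$, $v=[a_1,\dots,b_i,\dots,a_{n-1}]$ in $K_{n-1}(G)$ and $k\in K_n(G)$. Then $[a_1,\dots,a_ib_i,\dots,a_n]=[uvk,a_n]$, and two applications of Lemma~\ref{b1}(2) kill every term except $[u,a_n][v,a_n]$ modulo $K_{n+1}(G)$: the corrections $[k,a_n]\in K_{n+1}(G)$, $[uv,a_n,k]\in[K_n(G),K_n(G)]\subseteq K_{2n}(G)$, and $[u,a_n,v]\in[K_n(G),K_{n-1}(G)]\subseteq K_{2n-1}(G)$ all lie in $K_{n+1}(G)$ once $n\ge 2$. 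Since $[u,a_n]=[a_1,\dots,a_n]$ and $[v,a_n]=[a_1,\dots,b_i,\dots,a_n]$, this is exactly the claim.

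Granting (1), part (2) follows by peeling off one exponent at a time. First I fix a single position and show $[\dots,a_i^{m},\dots]\equiv[\dots,a_i,\dots]^{m}\pmod{K_{n+1}(G)}$ for every integer $m$: for $m\ge 1$ this is induction on $m$ via (1) and the splitting $a_i^{m}=a_i^{m-1}a_i$; the case $m=0$ records that a commutator with an identity entry is trivial; and $m<0$ comes from applying (1) to $a_i^{-1}a_i=1$, which gives $[\dots,a_i^{-1},\dots]\equiv[\dots,a_i,\dots]^{-1}$. Because congruence modulo the normal subgroup $K_{n+1}(G)$ is preserved under raising to powers, applying this identity successively in positions $1,2,\dots,n$ telescopes to $[a_1^{i_1},\dots,a_n^{i_n}]\equiv[a_1,\dots,a_n]^{i_1\cdots i_n}\pmod{K_{n+1}(G)}$.

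The only genuine point requiring care is the reordering of the two weight-$n$ factors in (1), which is what forces the appeals to $[K_n(G),K_n(G)]\subseteq K_{n+1}(G)$ and, in the $i<n$ step, to $[K_n(G),K_{n-1}(G)]\subseteq K_{2n-1}(G)\subseteq K_{n+1}(G)$; everything else is routine cancellation of higher-weight correction terms. I expect no essential obstacle beyond keeping $n\ge 2$ so that these containments actually land inside $K_{n+1}(G)$, the case $n=1$ being trivial.
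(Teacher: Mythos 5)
The paper does not prove this lemma: it is quoted as a known preliminary from \cite[Proposition 2.1.3]{Qu}, so there is no in-paper argument to compare yours against. Your proof is correct and is essentially the standard textbook derivation — part (1) by induction on the weight using the expansion identities of Lemma \ref{b1} and collecting all higher-weight correction terms into the normal subgroup $K_{n+1}(G)$, and part (2) by reducing one exponent position at a time via (1). The one input you use that is not among the paper's stated preliminaries is the containment $[K_r(G),K_s(G)]\subseteq K_{r+s}(G)$; this is classical (e.g.\ Huppert, III.2.11) but should be cited explicitly, since it is exactly what justifies the reordering of the two weight-$n$ factors and the disposal of the terms $[u,a_n,v]$ and $[uv,a_n,k]$ in your inductive step. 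Your handling of the exponents $m\leqslant 0$ in part (2) (the identity entry killing the commutator, and $a_i^{-1}$ via $a_i^{-1}a_i=1$) is also accurate.
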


\begin{lem}\label{p3}{\rm (\cite[III, 1.11 Hilfssatz]{Hu})}~Let $G$ be a finite group and $G=\langle M\rangle$. Then
$$K_n(G)=\langle [x_1,\cdots,x_n],K_{n+1}(G)\mid x_i\in M\rangle.$$
\end{lem}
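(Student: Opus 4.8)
The plan is to prove the statement by induction on $n$, noting first that the inclusion $\langle [x_1,\ldots,x_n], K_{n+1}(G)\mid x_i\in M\rangle \leqslant K_n(G)$ is immediate, since every $[x_1,\ldots,x_n]\in K_n(G)$ and $K_{n+1}(G)\leqslant K_n(G)$. For the reverse inclusion I would work throughout modulo $K_{n+1}(G)$: because $K_n(G)/K_{n+1}(G)$ is central in $G/K_{n+1}(G)$, this factor is abelian, and the whole task reduces to showing that its generators may be taken to be the images of the weight-$n$ commutators in $M$.

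The first key step is to record the bilinearity of the commutator map modulo $K_{n+1}(G)$. For $n\geqslant 2$, let $a,a'\in K_{n-1}(G)$ and $g,g'\in G$. Lemma~\ref{b1}(2) gives $[aa',g]=[a,g][a,g,a'][a',g]$; since $[a,g]\in K_n(G)$ and $a'\in K_{n-1}(G)$, the middle term lies in $[K_n(G),K_{n-1}(G)]\leqslant K_{2n-1}(G)\leqslant K_{n+1}(G)$, so $[aa',g]\equiv [a,g][a',g]~(\mod K_{n+1}(G))$. Likewise Lemma~\ref{b1}(1) yields $[a,gg']\equiv [a,g][a,g']~(\mod K_{n+1}(G))$, because $[a,g,g']\in[K_n(G),G]=K_{n+1}(G)$; and Lemma~\ref{b1}(3) shows inverses pass to inverses in each argument. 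In particular, if $a\equiv\tilde a~(\mod K_n(G))$ then $[a,g]\equiv[\tilde a,g]~(\mod K_{n+1}(G))$, since the correction lies in $[K_n(G),G]=K_{n+1}(G)$.

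With bilinearity in hand the induction is routine. The base case $n=1$ is just the hypothesis $G=\langle M\rangle$. For the inductive step I assume $K_{n-1}(G)=\langle [x_1,\ldots,x_{n-1}],K_n(G)\mid x_i\in M\rangle$ and use $K_n(G)=[K_{n-1}(G),G]$, so that $K_n(G)/K_{n+1}(G)$ is generated by the images of the commutators $[a,g]$ with $a\in K_{n-1}(G)$, $g\in G$. By the induction hypothesis each such $a$ is, modulo $K_n(G)$, a product of terms $[x_1,\ldots,x_{n-1}]^{\pm1}$ with $x_i\in M$; and since $G=\langle M\rangle$, each $g$ is a product of elements $m^{\pm1}$ with $m\in M$. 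Applying the bilinearity from the previous step (and using Lemma~\ref{p2} to absorb the $K_n(G)$- and $G'$-parts), $[a,g]$ is congruent modulo $K_{n+1}(G)$ to a product of terms $[x_1,\ldots,x_{n-1},m]^{\pm1}=[[x_1,\ldots,x_{n-1}],m]^{\pm1}$ with all entries in $M$. Hence $K_n(G)/K_{n+1}(G)$ is generated by these weight-$n$ commutators, which is exactly the claim.

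The only genuinely delicate point is this bilinearity reduction: one must verify that every error term produced by the identities of Lemma~\ref{b1} already lies in $K_{n+1}(G)$. This rests on the standard containment $[K_i(G),K_j(G)]\leqslant K_{i+j}(G)$ and, crucially for the first argument, on the inequality $2n-1\geqslant n+1$, valid precisely for $n\geqslant 2$ where the inductive step operates. Once this is confirmed, reducing both the $K_{n-1}(G)$-argument (via the induction hypothesis) and the $G$-argument (via $G=\langle M\rangle$) to generators in $M$ is a formal consequence, so no further computation is required.
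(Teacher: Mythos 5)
Your argument is correct: the easy inclusion, the reduction modulo $K_{n+1}(G)$ using centrality of $K_n(G)/K_{n+1}(G)$, the bilinearity of the commutator map with error terms controlled by $[K_i(G),K_j(G)]\leqslant K_{i+j}(G)$ and the inequality $2n-1\geqslant n+1$ for $n\geqslant 2$, and the induction on $n$ all go through. The paper gives no proof of this lemma, citing it directly from Huppert (III, 1.11 Hilfssatz); your proof is essentially the standard textbook argument found there, so there is nothing to compare beyond noting that you have correctly reconstructed it.
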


\begin{lem}\label{l3}{\rm (\cite[Lemma 3.2]{B2})}~Assume $G$ is a finite $p$-group. If $d(G)=2$ and $K_{5}(G)=1$, then $G$ is metabelian.
\end{lem}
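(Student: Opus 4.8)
The plan is to exploit the hypothesis $d(G)=2$ to pin down the structure of $G'$ very tightly and then show directly that $G'$ is abelian. Write $G=\langle a,b\rangle$ and set $c=[a,b]$. By Lemma \ref{p3}, together with the identities $[a,a]=[b,b]=1$ and $[b,a]=[a,b]^{-1}$ coming from Lemma \ref{b1}(3), the weight-$2$ commutators in $a,b$ collapse to the single one $c$, so that $K_2(G)=G'=\langle c,\,K_3(G)\rangle$. In other words, $G'$ is generated by the single element $c$ modulo $K_3(G)$; this is precisely where two-generation enters, and it is the feature that makes the whole argument work.

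Next I would invoke the standard inclusion $[K_i(G),K_j(G)]\leqslant K_{i+j}(G)$ for the lower central series and combine it with the hypothesis $K_5(G)=1$. This yields two facts at once. First, $[K_3(G),K_3(G)]\leqslant K_6(G)=1$, so $K_3(G)$ is abelian. Second, $[\langle c\rangle,K_3(G)]\leqslant[K_2(G),K_3(G)]\leqslant K_5(G)=1$, so $c$ centralizes $K_3(G)$. Since $K_3(G)\trianglelefteq K_2(G)$, the product $\langle c\rangle K_3(G)$ is a subgroup equal to $\langle c,K_3(G)\rangle=G'$, and it is generated by the two elementwise-commuting abelian subgroups $\langle c\rangle$ and $K_3(G)$. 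Hence $G'$ is abelian, i.e.\ $G''=1$ and $G$ is metabelian.

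The argument is short, so the hard part is conceptual rather than computational: one must notice that two-generation collapses $G'/K_3(G)$ to a cyclic group with the single weight-$2$ generator $c$, after which every commutator that could witness non-commutativity of $G'$ lands in $[K_2(G),K_3(G)]$ or $[K_3(G),K_3(G)]$, both of which sit inside $K_5(G)=1$. The only routine checks are the reduction of the weight-$2$ commutators to $c$ via Lemma \ref{b1}, and the appeal to $[K_i,K_j]\leqslant K_{i+j}$, which I would either cite as classical or derive quickly from Lemma \ref{p2}. I do not expect to need any weight-$4$ commutator calculus, since the $\langle c\rangle K_3(G)$ decomposition bypasses it entirely.
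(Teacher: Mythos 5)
Your argument is correct, and the paper itself gives no proof of this lemma (it is imported verbatim from \cite[Lemma 3.2]{B2}), so there is nothing to diverge from: your route is the standard one. The key steps all check out: Lemma \ref{p3} with $M=\{a,b\}$ gives $G'=\langle c,K_3(G)\rangle=\langle c\rangle K_3(G)$ since $K_3(G)\trianglelefteq G$, and the classical inclusion $[K_i(G),K_j(G)]\leqslant K_{i+j}(G)$ (three subgroups lemma; better cited from Huppert than "derived from Lemma \ref{p2}", which only gives multilinearity) kills both $[\langle c\rangle,K_3(G)]$ and $[K_3(G),K_3(G)]$ inside $K_5(G)=1$, whence $G''=1$.
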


\begin{lem}\label{p1}{\rm (\cite{Xu})}~Assume $G$ is a metabelian group and $a, b \in G$, $c \in G'$. Then

$(1)$ $[c,a,b]=[c,b,a];$

$(2)$ For any positive integers $m$ and $n$,
$$[a^m,b^n]=\prod\limits_{i=1}^{m}\prod\limits_{j=1}^{n}[ia,jb]^{{m\choose i}{n\choose j}},$$
where $[ia,jb]=[a,b,\underbrace{a,\cdots,a}_{i-1},\underbrace{b,\cdots,b}_{j-1}].$
\end{lem}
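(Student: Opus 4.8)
The plan is to regard the abelian group $G'$ as a right module over $\mathbb{Z}[G]$ under conjugation and to reduce both statements to polynomial identities in commuting operators. Writing $G'$ additively, for $u\in G'$ and $g\in G$ set $u\cdot g=u^{g}=g^{-1}ug$; since $G'$ is abelian this is a well-defined right action, and the commutator $[u,g]=u^{-1}u^{g}$ becomes $u(g-1)$. The crucial point for part (1) is that $G'$ acts trivially on itself: every element of $G'$ centralizes the abelian group $G'$, so the conjugation action of $G$ on $G'$ factors through the abelian quotient $G/G'$. Hence the operators induced by $a$ and $b$ commute, and for $c\in G'$ one gets $[c,a,b]=c(a-1)(b-1)=c(b-1)(a-1)=[c,b,a]$, which is exactly (1). (If one wishes to avoid module language, the same identity follows from Lemma~\ref{b1} together with $G''=1$ by a short direct manipulation.)

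For part (2) I would first treat the one-variable collection. Put $t=[a,b]\in G'$ and use $[xy,z]=[x,z]^{y}[y,z]$ (from Lemma~\ref{b1}) with telescoping to obtain $[a^{m},b]=\prod_{k=0}^{m-1}[a,b]^{a^{k}}$, i.e. $t(1+a+\cdots+a^{m-1})$ in operator form. Since $[a,b,\underbrace{a,\dots,a}_{i-1}]=t(a-1)^{i-1}$, I would write $a^{k}=(1+(a-1))^{k}$ and sum using the hockey-stick identity $\sum_{k=i}^{m-1}\binom{k}{i}=\binom{m}{i+1}$; this converts the sum into $\sum_{i=1}^{m}\binom{m}{i}\,t(a-1)^{i-1}$, which is the multiplicative statement $[a^{m},b]=\prod_{i=1}^{m}[ia,b]^{\binom{m}{i}}$.

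The general case then follows by running the identical argument in the $b$-variable on top of the first. Applying the one-variable computation to $[a^{m},b^{n}]$ gives $[a^{m},b^{n}]=[a^{m},b](1+b+\cdots+b^{n-1})$ in operator form, hence $[a^{m},b^{n}]=t\,(1+a+\cdots+a^{m-1})(1+b+\cdots+b^{n-1})$. Expanding both factors by the hockey-stick identity and recognizing $t(a-1)^{i-1}(b-1)^{j-1}=[ia,jb]$ yields the double product. Part (1) is used twice here: it guarantees that the iterated commutator $[ia,jb]$ is independent of the order in which the $a$'s and $b$'s are inserted, and, equivalently, that $(a-1)$ and $(b-1)$ commute, so the two single-variable expansions can be performed independently.

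The main obstacle I anticipate is bookkeeping rather than conceptual: one must consistently track the conjugations produced by $[xy,z]=[x,z]^{y}[y,z]$ and verify that all correction terms land in $G'$ and therefore commute (this is precisely where $G''=1$ is indispensable), and one must carry out the binomial/hockey-stick manipulation cleanly to produce the coefficients $\binom{m}{i}\binom{n}{j}$. In the module picture these reduce to routine algebra in the commutative ring generated by the commuting operators $a$ and $b$, so the only genuine care needed is in justifying, via part (1) and the vanishing of $G''$, that the passage to that commutative picture is legitimate.
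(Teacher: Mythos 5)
The paper does not prove this lemma; it is quoted verbatim from the cited reference of M.~Y.~Xu, so there is no internal proof to compare against. Your module-theoretic argument is correct and is essentially the standard proof of this result: viewing the abelian group $G'$ as a $\mathbb{Z}[G/G']$-module (legitimate because $G'\leq C_G(G')$), part (1) is the commutativity of the operators $a-1$ and $b-1$, and part (2) follows from the telescoping identity $[a^m,b]=\prod_{k=0}^{m-1}[a,b]^{a^k}$ together with the expansion $\sum_{k=0}^{m-1}a^k=\sum_{i=1}^{m}\binom{m}{i}(a-1)^{i-1}$ via the hockey-stick identity, applied in each variable. All the steps you flag as needing care (conjugates landing in the abelian $G'$, commutation of the two expansions) are correctly justified by $G''=1$ and part (1), so the proposal is sound as written.
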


\begin{lem}\label{d1}{\rm (\cite[Theorem 3.5]{B2})}~Assume $G$ is a finite group in which itself is not of nilpotency class $\leqslant n$ and all its proper subgroups are of nilpotency class $\leqslant n$. Then $c(G)\leqslant\frac{dn}{d-1}$, where $d=d(G)$.
\end{lem}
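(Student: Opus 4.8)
The plan is to reduce to the case of a finite $p$-group, isolate the top of the lower central series, and run a multiplicity count on a nontrivial commutator of maximal weight, feeding in the hypothesis only through the remark that any subgroup generated by fewer than $d$ of the generators is proper. First I would observe that since $c(G)$ is meant to be finite, $G$ is nilpotent with $c(G)>n$. Writing $G$ as the direct product of its Sylow subgroups and using $c(G)=\max_P c(P)$, if there were more than one Sylow subgroup each would be a proper subgroup of class $\leqslant n$, forcing $c(G)\leqslant n$, a contradiction. Hence $G$ is a finite $p$-group, $d=d(G)=\dim_{\mathbb{F}_p}G/\Phi(G)$, and $d\geqslant 2$ (otherwise $G$ is cyclic of class $1$), so $\frac{dn}{d-1}$ is well defined.

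Next I would fix a minimal generating set $x_1,\dots,x_d$. For each $i$ the subgroup $H_i=\langle x_j:j\neq i\rangle$ is generated by $d-1$ elements, hence is proper (else $d(G)\leqslant d-1$), so $c(H_i)\leqslant n$ and $K_{n+1}(H_i)=1$. By Lemma~\ref{p3}, $K_m(G)$ is generated modulo $K_{m+1}(G)$ by left-normed commutators $[y_1,\dots,y_m]$ with each $y_l\in\{x_1,\dots,x_d\}$. If such a commutator omits some $x_i$ then it lies in $H_i$, hence in $K_m(H_i)$, which is trivial once $m\geqslant n+1$. This yields the key observation: every nontrivial left-normed commutator of weight $\geqslant n+1$ in the generators must involve all $d$ generators (and, applying the same remark to each prefix, all $d$ generators in fact occur among the first $n+1$ letters).

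I would then carry out the count on the top layer. Put $c=c(G)>n$; since $K_{c+1}(G)=1$ and $K_c(G)\neq 1$, choose a nontrivial $w=[y_1,\dots,y_c]$. By the key observation all $d$ generators occur in $w$; let $a_i\geqslant 1$ be the number of occurrences of $x_i$, so $\sum_i a_i=c$, and pick $x_t$ of minimal multiplicity, $a_t\leqslant\lfloor c/d\rfloor\leqslant c/d$. The plan is to use commutator collection (Lemmas~\ref{b1} and~\ref{p2}) to push the $a_t$ copies of $x_t$ to the outside of $w$, working modulo $K_{c+1}(G)=1$ — that is, inside the central abelian top layer $K_c(G)\leqslant Z(G)$, where the Lie identities are exact and where every intermediate commutator of weight $\geqslant n+1$ omitting a generator vanishes — so as to exhibit a nontrivial commutator of weight $c-a_t$ supported on $H_t$. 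Since $c(H_t)\leqslant n$, this forces $c-a_t\leqslant n$; combined with $a_t\leqslant c/d$ it gives $\frac{(d-1)c}{d}\leqslant c-a_t\leqslant n$, whence $c\leqslant\frac{dn}{d-1}$. Equivalently, the collection step is exactly the estimate $c(G)\leqslant c(H_t)+a_t$.

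I expect the main obstacle to be precisely this collection/extraction step. Deleting the occurrences of $x_t$ from a nonzero commutator need not, a priori, leave a nonzero commutator, because the rewriting process produces correction terms; the real content is to show that these corrections all vanish. This is where the relations ``a weight-$\geqslant n+1$ commutator omitting a generator is trivial'' re-enter at the intermediate levels during collection, together with the exactness of Jacobi and antisymmetry in the abelian top layer. The reduction to a $p$-group and the key observation are routine; the bookkeeping that makes $c(G)\leqslant c(H_t)+a_t$ rigorous is the delicate part.
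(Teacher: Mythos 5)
The paper does not actually prove this lemma---it is quoted from \cite[Theorem 3.5]{B2}---so your attempt has to be judged against the standard argument there. Your reduction to a finite $p$-group, the observation that a nontrivial left-normed commutator of weight $\geqslant n+1$ in the generators must involve all $d$ of them, and the final arithmetic (from $c-a_t\leqslant n$ and $a_t\leqslant c/d$, or equivalently from summing $c-a_i\leqslant n$ over all $i$ to get $(d-1)c\leqslant dn$) are all fine. The genuine gap is exactly the step you flag yourself: you cannot establish $c-a_t\leqslant n$ by collecting the $a_t$ occurrences of $x_t$ to the outside of $w=[y_1,\dots,y_c]$ and extracting a nontrivial commutator of weight $c-a_t$ supported on $H_t=\langle x_j: j\neq t\rangle$. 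Working modulo $K_{c+1}(G)=1$ does not dispose of the correction terms: in the graded Lie ring $\bigoplus_m K_m(G)/K_{m+1}(G)$, interchanging two adjacent letters produces an extra bracket of the \emph{same} total weight $c$ in which $x_t$ is absorbed into an inner commutator $[x_t,x_j]$, so the corrections land precisely in the layer you are analysing and cannot be discarded; nor is there any reason a nonzero multihomogeneous Lie element should be rewritable with all copies of one letter at the end. So the ``delicate bookkeeping'' is not merely delicate---the extraction strategy itself is unavailable.

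The repair is to avoid extraction altogether and argue exactly as in the proof of Lemma~\ref{c1}, but with the roles dualized: replace $H_t$ by the maximal subgroup $M_t=\langle x_j \,(j\neq t),\Phi(G)\rangle$. Then $M_t$ is proper and normal in $G$, so $c(M_t)\leqslant n$, and each $K_j(M_t)$ is characteristic in $M_t$, hence normal in $G$. Consequently, for $h\in K_j(M_t)$ one has $[h,x_l]\in K_{j+1}(M_t)$ whenever $l\neq t$ (since $x_l\in M_t$) and $[h,x_t]\in K_j(M_t)$ (by normality). Since $y_1\neq y_2$, the initial segment $[y_1,y_2]$ lies in $K_2(M_t)$ if neither entry is $x_t$ and in $K_1(M_t)=M_t$ if one of them is $x_t$; an induction along the left-normed commutator then gives $w\in K_{m}(M_t)$, where $m=c-a_t$ is the number of entries different from $x_t$. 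As $K_{n+1}(M_t)=1$ and $w\neq 1$, this yields $c-a_t\leqslant n$ directly, with no rewriting of $w$, and the rest of your count goes through verbatim.
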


\section{The Main Results and Their Proofs}

For convenience, denoted by $\mathcal{P}_n$  the set of all $p$-groups whose class $>n$ in which every proper subgroup has class at most $n$. In this section, the main theorem we will prove is
\begin{thm*}
Assume $G$ is a finite $p$-group and $G\in \mathcal{P}_3$. If $p=2$ or $3$,  then $c(G)\leqslant5$.
\end{thm*}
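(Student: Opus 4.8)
The plan is to isolate one hard case and then kill it by commutator collection. Applying Lemma~\ref{d1} with $n=3$ gives $c(G)\le \frac{3d}{d-1}$, where $d=d(G)$; since $\frac{3d}{d-1}$ is strictly decreasing in $d$ and equals $\tfrac92$ at $d=3$, every $G\in\mathcal P_3$ with $d(G)\ge 3$ already satisfies $c(G)\le 4$. Hence it suffices to handle $d(G)=2$, where the estimate only yields $c(G)\le 6$, and to rule out $c(G)=6$. So I would assume for contradiction that $G=\langle a,b\rangle$ with $c(G)=6$; then $K_7(G)=1\neq K_6(G)$, $K_6(G)\le Z(G)$, and by Lemma~\ref{p3} each factor $K_i(G)/K_{i+1}(G)$ is spanned by left-normed commutators in $a,b$.

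Next I would bring in the structural input. As $G/K_5(G)$ is $2$-generated with $K_5\big(G/K_5(G)\big)=1$, Lemma~\ref{l3} shows $G/K_5(G)$ is metabelian, i.e. $G''\le K_5(G)$; thus $G$ is ``metabelian through weight $4$'', so in the associated graded Lie ring the bracket of two weight-$2$ classes already vanishes and the reordering identity of Lemma~\ref{p1}(1) is available modulo weight $\ge 5$. The maximal subgroups are $\langle b\rangle\Phi(G)$ and $M_k=\langle ab^{k}\rangle\Phi(G)$ for $0\le k\le p-1$, each containing $G'$ and each of class $\le 3$, i.e. $K_4(M_i)=1$. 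Since $[a,b]\in G'\le M_0'$, commuting it three times with $a$ inside $\langle a\rangle\Phi(G)$ gives $[a,b,a,a,a]\in K_4\big(\langle a\rangle\Phi(G)\big)=1$, and symmetrically $[a,b,b,b,b]=1$; these two relations hold exactly, with no reordering. They annihilate every weight-$5$ and weight-$6$ monomial whose tail contains $a$ (resp.\ $b$) with multiplicity $\ge 3$, which after reordering reduces the ``straight'' part of $K_6(G)/K_7(G)$ to the single class $[a,b,a,a,b,b]$.

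The core is then to expand the conditions $K_4(M_k)=1$ for $k\ge 1$. Writing $c=[a,b]$ and letting $\alpha,\beta$ denote the adjoint (conjugation) actions of $a,b$ on $G'$, the relation from $M_k$ reads $\big((1+\alpha)(1+\beta)^{k}-1\big)^{3}c=1$; expanding by Lemmas~\ref{b1} and \ref{p2} and discarding weight $\ge 7$ (which is trivial) produces, after projecting to $K_5/K_6$ and then lifting, a congruence whose weight-$6$ part is an integer multiple of $[a,b,a,a,b,b]$ against weight-$5$ terms. For $k=1$ the relevant coefficient is $6=\binom{4}{2}$ (the action of $3(\alpha+\beta)^2\alpha\beta$ on $c$ contributing $2\,[a,b,a,a,b,b]$), and for $p=3$ the extra subgroup $M_2$ supplies a second, independent congruence. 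The decisive phenomenon I expect is that for $p\in\{2,3\}$ the integers entering these congruences (essentially $3$ and $6$) are divisible by $p$ in just the way needed to make the resulting system on $K_6(G)$ admit only the trivial solution, forcing $K_6(G)=1$ and contradicting $c(G)=6$; for $p\ge 5$ the same coefficients are $p$-units, consistent with the Evans--Sandor class-$6$ examples, which is why no contradiction can arise there.

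I expect the main obstacle to lie entirely in this last expansion, for two reasons. First, $\Phi(G)=G'G^{p}$ also contains $p$-th powers, so commuting elements such as $b^{p}$ inside $M_k$ contributes further terms governed by power/collection formulae, and these interact nontrivially with the small prime $p$. Second, $G$ is only metabelian through weight $4$: the possibly nonzero brackets of a weight-$2$ with a weight-$3$ (resp.\ weight-$4$) class, and of two weight-$3$ classes, survive in $K_5(G)/K_6(G)$ and $K_6(G)/K_7(G)$, so the ``non-straight'' generators of $K_6(G)$ such as $[[a,b,a],[a,b,b]]$ must be tracked and shown not to obstruct the reduction to $[a,b,a,a,b,b]$. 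Controlling these two error sources, and verifying that the surviving coefficient is a $p$-unit exactly when $p\ge 5$, is where the genuine work will be.
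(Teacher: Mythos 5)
Your framing — reduce to $d(G)=2$ via Lemma~\ref{d1}, get $K_7(G)=1$, use Lemma~\ref{l3} to make $G/K_5(G)$ metabelian, kill the left-normed weight~$5$ and~$6$ commutators whose tail repeats a generator three times, and then extract linear relations among the survivors from the class-$\leqslant 3$ condition on proper subgroups — is exactly the paper's skeleton. But the decisive step is missing, and the one concrete claim you make about it is false. You assert that ``after reordering'' the straight part of $K_6(G)/K_7(G)$ collapses to the single class $[a,b,a,a,b,b]$. Reordering the tail of a weight-$6$ commutator is not free here: $G$ is only metabelian modulo $K_5(G)$, and the Jacobi-type error terms for swaps deep in the tail land in $[K_4(G),K_2(G)]$ and $[K_3(G),K_3(G)]$, i.e.\ precisely in the weight-$5$ and weight-$6$ layers you are trying to compute. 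Indeed, if all six monomials with two $a$'s and two $b$'s in the tail were equal, then combining with the paper's relation $(\ref{1-5})$ (which gives $[a,b,a,a,b,b]=[a,b,a,b,a,b]^{-2}$) would force $[a,b,a,b,a,b]^3=1$ and hence $K_6(G)=1$ for every prime $p\geqslant 5$ as well, contradicting the known class-$6$ examples in $\mathcal{P}_3$ for $p\geqslant 5$. The correct picture, established in Lemma~\ref{t3} by feeding $[a,b]$, $[a,b,a]$, $[a,b,b]$ into the class-$\leqslant3$ subgroups $\langle\,\cdot\,,ab\rangle$ rather than by reordering, is that $K_6(G)=\langle[a,b,a,b,a,b]\rangle\lesssim{\rm C}_p$ with four of the six monomials equal to $g:=[a,b,a,b,a,b]$ and two equal to $g^{-2}$. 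For $p=2$ this makes your target class $[a,b,a,a,b,b]=g^{-2}$ \emph{automatically} trivial while $g$ itself survives, so a reduction to $[a,b,a,a,b,b]$ cannot close the $p=2$ case at all.

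The remaining content — showing $g=1$ — is exactly what you defer with ``I expect,'' and your proposed mechanism (binomial coefficients $3$ and $6$ being divisible by $p$) does not match what actually happens at $p=2$: there the paper needs a genuinely different ingredient, namely the Hall--Witt identity applied to $x=[a,b,a]$, $a$, $b$ together with $c(\langle a^2,b,[a,b]\rangle)\leqslant3$, to prove $[a,b,a,a,b,b]=[a,b,a,b,a,b]$; combined with $(\ref{1-5})$ this gives $g^3=1$, and with $g^2=1$ forces $g=1$. For $p=3$ your divisibility instinct is closer to the truth (the paper uses $\langle a^3,ab\rangle$, $\langle a^3,ab^{-1}\rangle$, $\langle a,b^3\rangle$ and the power formulas of Lemmas~\ref{l6} and~\ref{l31} to get $[a,b,a,b]^3=[a,b,a,a]^3=1$ and then $[a,b,b,b,a,a]=1$), but even there the computation rests on first pinning down $K_5(G)$ and $K_6(G)$ as in Lemmas~\ref{t2} and~\ref{t3}, which your sketch does not do. A minor additional point: your derivation of $[a,b,a,a,a]=1$ by commuting three times inside $\langle a\rangle\Phi(G)$ only handles consecutive trailing $a$'s; to kill monomials like $[a,b,a,b,a,a]$ you need the normal-series argument of Lemma~\ref{c1} with $H=\langle a,G'\rangle$, where commuting with $b$ preserves each $K_i(H)$ because $K_i(H)\lhd G$.
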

Obviously, if $d(G)\geqslant3$ for the group $G$ in Theorem, by Lemma \ref{d1}, the Theorem is always true. So assume $d(G)=2$ for the group $G$ in Theorem as follows.

\begin{lem}\label{c1} Assume $G$ is a finite $p$-group and $G\in \mathcal{P}_n$. For any $x,y \in G$, let $t=[x,y,z_1,\cdots,z_s]$, where $s\geqslant n$ and $z_i\in \{x,y\}$ for $1\leqslant i\leqslant s$. Then $t=1$
if there are at least $n$  $x$'s  or at least $n$  $y$'s in $\{z_{1},\cdots,z_{s}\}$.
\end{lem}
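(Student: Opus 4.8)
The plan is to split on whether $\langle x,y\rangle$ is all of $G$. If $H_0:=\langle x,y\rangle\neq G$, then $H_0$ is a proper subgroup, so $c(H_0)\le n$ by hypothesis, while $t$ is a left-normed commutator of weight $s+2\ge n+2>n$ in $H_0$; hence $t\in K_{s+2}(H_0)=1$. This disposes in particular of every case with $d(G)\ge 3$ (where two elements can never generate), so I may assume $\langle x,y\rangle=G$. Since $G$ is nonabelian, this forces $d(G)=2$ with $\{x,y\}$ a minimal generating set, so $\{\bar x,\bar y\}$ is a basis of $G/\Phi(G)\cong C_p\times C_p$.

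From here I would work inside the normal closure $H:=\langle x\rangle^{G}$. First, $H$ is proper: its image in $G/\Phi(G)$ lies in $\langle\bar x\rangle$, so $H\Phi(G)\neq G$; consequently $c(H)\le n$, that is $K_{n+1}(H)=1$. Second, $[x,y]=x^{-1}x^{y}\in H$, and because $H\trianglelefteq G$ every further left-normed commutator with entries in $\{x,y\}$ remains in $H$; in particular $t\in H$.

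The crux is a depth estimate in the lower central series of $H$. Writing $w_0=[x,y]$ and $w_j=[w_{j-1},z_j]$, so $t=w_s$, I claim $w_j\in K_{1+a_j}(H)$, where $a_j=|\{i\le j:z_i=x\}|$. This goes by induction on $j$, the base being $w_0=[x,y]\in H=K_1(H)$. For the inductive step there are two moves. If $z_j=x$, then $x\in H$ and $w_{j-1}\in K_{1+a_{j-1}}(H)$ give $[w_{j-1},x]\in[K_{1+a_{j-1}}(H),H]=K_{2+a_{j-1}}(H)$, and the $x$-count rises by one to match. If $z_j=y$, the $x$-count is unchanged; here I use that each $K_m(H)$ is characteristic in $H$ and $H\trianglelefteq G$, so $K_m(H)\trianglelefteq G$, whence $w_{j-1}^{\,y}\in K_{1+a_{j-1}}(H)$ and therefore $[w_{j-1},y]=w_{j-1}^{-1}w_{j-1}^{\,y}\in K_{1+a_{j-1}}(H)$. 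Granting the claim, if there are at least $n$ $x$'s among $z_1,\dots,z_s$ then $a_s\ge n$, so $t=w_s\in K_{n+1}(H)=1$.

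For the second alternative I would run the symmetric argument with $H':=\langle y\rangle^{G}$, which is proper for the same reason and also contains $[x,y]=(y^{x})^{-1}y$; tracking the $y$-count gives $t\in K_{1+b}(H')$ with $b$ the number of $y$'s, so $t=1$ as soon as $b\ge n$. The only place demanding genuine care is the induction's bookkeeping and, underlying it, the observation that commuting with $y$ does not increase depth in $H$: this rests precisely on $K_m(H)\trianglelefteq G$, i.e. on having replaced $\langle x,y\rangle$ by the \emph{normal} proper subgroup $\langle x\rangle^{G}$. The reduction to $\langle x,y\rangle=G$ and the properness of $\langle x\rangle^{G}$ (which is exactly what $d(G)=2$ buys) are thus the load-bearing steps.
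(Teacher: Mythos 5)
Your proof is correct and takes essentially the same route as the paper: the paper works inside the proper normal subgroup $H=\langle x,G'\rangle$ rather than $\langle x\rangle^{G}$, but the core argument --- commutation with $x$ descends one step in the lower central series of $H$, while commutation with $y$ preserves each $K_m(H)$ because these terms are normal in $G$ --- is identical. Your preliminary case split on whether $\langle x,y\rangle=G$ is harmless but not needed, since $\langle x,G'\rangle$ (and likewise $\langle x\rangle^{G}$) is automatically proper whenever $G$ is nonabelian.
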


\begin{proof}
Let $H=\langle x,G'\rangle$. Then $H\lhd G$. Since $G\in \mathcal{P}_n$,
$c(H)\leqslant n$. Thus there exists a lower central series of $H$:
$$H=K_1(H)\geq K_2(H)\geq K_3(H)\geq\cdots\geq K_{n+1}(H)=1.$$
Then $K_i(H)~ char ~H$, where $1\leqslant i\leqslant n+1$. Moreover, $K_i(H)\lhd G$. Notice that for any $h\in K_i(H)$,
$$[h,x]\in K_{i+1}(H) ~{\rm and}~ [h,y]=h^{-1}h^y\in K_i(H).$$
Thus if there are at least $n$  $x$'s  in $\{z_{1},\cdots,z_{s}\}$, $t\in K_{n+1}(H)=1$.
Let $K=\langle y,G'\rangle$. Similar to the above, if there are at least $n$  $y$'s  in $\{z_{1},\cdots,z_{s}\}$, then $t=1$.
\end{proof}

\begin{lem}\label{t1} Assume $G$ is a finite $p$-group with $d(G)=2$. Then for any $a,b\in G$,
$$[a,b,a,b]\equiv[a,b,b,a]~(\mod K_5(G)).$$
\end{lem}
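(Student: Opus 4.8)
The plan is to set $c=[a,b]$ and prove the equivalent congruence $[c,a,b]\equiv[c,b,a]\pmod{K_5(G)}$. Throughout I would work modulo $N:=K_5(G)$ and use repeatedly the standard weight estimate $[K_i(G),K_j(G)]\subseteq K_{i+j}(G)$ together with the commutator calculus of Lemma~\ref{b1}. The two elements $[c,a,b]$ and $[c,b,a]$ both lie in $K_4(G)$, which is central modulo $N$ (since $[K_4(G),G]=K_5(G)$), so they may be moved past any factor without affecting the congruence; this centrality is what makes a direct comparison feasible.

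First I would compare $[c,ab]$ and $[c,ba]$. Since $ab=ba\,[a,b]=ba\,c$, Lemma~\ref{b1}(1) gives
$$[c,ab]=[c,ba\cdot c]=[c,c]\,[c,ba]\,[c,ba,c]=[c,ba]\,[c,ba,c],$$
using $[c,c]=1$. Because $c\in K_2(G)$ and $ba\in G$, we have $[c,ba]\in K_3(G)$ and hence $[c,ba,c]=[[c,ba],c]\in[K_3(G),K_2(G)]\subseteq K_5(G)=N$. Therefore $[c,ab]\equiv[c,ba]\pmod N$.

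Next I would expand both sides by the product rule. Applying Lemma~\ref{b1}(1) with the factorizations $ab$ and $ba$ yields
$$[c,ab]=[c,b]\,[c,a]\,[c,a,b],\qquad [c,ba]=[c,a]\,[c,b]\,[c,b,a].$$
Combining with the previous step gives $[c,b]\,[c,a]\,[c,a,b]\equiv[c,a]\,[c,b]\,[c,b,a]\pmod N$. Now $[c,a]$ and $[c,b]$ both lie in $K_3(G)$, so their commutator lies in $[K_3(G),K_3(G)]\subseteq K_6(G)\subseteq N$; hence $[c,b]\,[c,a]\equiv[c,a]\,[c,b]\pmod N$. Since in addition $[c,a,b]$ and $[c,b,a]$ are central modulo $N$, I may rewrite the left-hand side as $[c,a]\,[c,b]\,[c,a,b]$ and cancel the common prefix $[c,a]\,[c,b]$, obtaining $[c,a,b]\equiv[c,b,a]\pmod N$, which is the assertion.

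The computation is essentially bookkeeping; the only thing to be careful about is keeping track of which stray commutators fall into $K_5(G)$, all of which is handled by the single estimate $[K_i(G),K_j(G)]\subseteq K_{i+j}(G)$ and by the centrality of $K_4(G)$ modulo $K_5(G)$. The conceptual crux, and the step I would single out as the heart of the argument, is the congruence $[c,ab]\equiv[c,ba]$, which rests on the vanishing $[c,c]=1$; morally this is the group-level shadow of the Jacobi relation $[[a,b],[a,b]]=0$ in the associated graded Lie ring, and it is exactly why the two orderings $a,b$ and $b,a$ become interchangeable. I would also note that the hypothesis $d(G)=2$ is never used in this argument, so the congruence in fact holds for arbitrary $a,b$ in any finite $p$-group.
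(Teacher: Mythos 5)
Your proof is correct, but it takes a genuinely different route from the paper's. The paper's proof is two lines long and leans on machinery: since $d(G/K_5(G))=2$ and $K_5(G/K_5(G))=1$, Lemma~\ref{l3} makes $G/K_5(G)$ metabelian, and then the metabelian identity $[c,a,b]=[c,b,a]$ for $c\in G'$ (Lemma~\ref{p1}(1)) gives the congruence immediately. You instead give a direct commutator computation from first principles: writing $c=[a,b]$ and $ab=ba\,c$, you get $[c,ab]\equiv[c,ba]\pmod{K_5(G)}$ from $[c,c]=1$ and the weight estimate $[K_i(G),K_j(G)]\leq K_{i+j}(G)$, then expand both sides by Lemma~\ref{b1}(1) and cancel. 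All the bookkeeping checks out (the stray terms $[c,ba,c]\in[K_3,K_2]\leq K_5$ and $[[c,b],[c,a]]\in[K_3,K_3]\leq K_6$ do vanish modulo $K_5(G)$; and for the final cancellation plain left cancellation in $G/K_5(G)$ already suffices, so the centrality of $K_4$ modulo $K_5$ is not even needed). What your argument buys is generality and self-containedness: as you note, the hypothesis $d(G)=2$ is never used, and indeed the congruence $[[a,b],a,b]\equiv[[a,b],b,a]\pmod{K_5(G)}$ is just the Jacobi relation $[[a,b],[a,b]]=0$ in the associated graded Lie ring, valid in any group; in particular you avoid invoking Lemma~\ref{l3}, which is a genuinely nontrivial input. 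What the paper's route buys is brevity, plus the fact that the metabelian quotient $G/K_5(G)$ is reused elsewhere (e.g.\ in Lemma~\ref{l31}), so the cost of Lemma~\ref{l3} is amortized. The only cosmetic caveat is that the inclusion $[K_i(G),K_j(G)]\leq K_{i+j}(G)$ is not among the paper's stated preliminaries, so you would want to cite it explicitly (it is standard, e.g.\ Huppert III.2.11).
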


\begin{proof}
Since $d(G/K_5(G))=2$ and $K_5(G/K_5(G))=K_5(G)/K_5(G)=1$, $G/K_5(G)$ is metabelian by Lemma~\ref{l3}. Thus, by Lemma~\ref{p1}$(1)$,~for any $a,b\in G$,
$$[a,b,a,b]\equiv[a,b,b,a]~(\mod K_5(G)).$$
\end{proof}

\begin{lem}\label{l6} Assume $G$ is a finite $p$-group and $G\in \mathcal{P}_3$, $n$ is a positive integer. If $x,y\in G$ and $\langle x,y\rangle< G$, then
$$[x^n,y]=[x,y]^n[x,y,x]^{(^n_2)} ~{\rm and} ~[x,y^n]=[x,y]^n[x,y,y]^{(^n_2)}.$$
\end{lem}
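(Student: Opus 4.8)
The plan is to work inside the proper subgroup $H:=\langle x,y\rangle$. Since $\langle x,y\rangle<G$ and $G\in\mathcal{P}_3$, the hypothesis gives $c(H)\leqslant3$, i.e.\ $K_4(H)=1$. The first thing I would record is that any group of class at most $3$ is metabelian: using the standard inclusion $[K_i(H),K_j(H)]\leqslant K_{i+j}(H)$ with $i=j=2$ we get $H''=[K_2(H),K_2(H)]\leqslant K_4(H)=1$. This is exactly what is needed to legitimately apply the metabelian collection formula of Lemma~\ref{p1}(2) inside $H$.

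For the first identity I would set $a=x$, $b=y$ and read $[x^n,y]=[x^n,y^1]$ off Lemma~\ref{p1}(2):
$$[x^n,y]=\prod_{i=1}^{n}\prod_{j=1}^{1}[ix,jy]^{{n\choose i}{1\choose j}}=\prod_{i=1}^{n}[x,y,\underbrace{x,\cdots,x}_{i-1}]^{{n\choose i}},$$
where the inner product collapses to its single term $j=1$ (with ${1\choose1}=1$). The key step is then the class-$3$ truncation: the factor indexed by $i$ is a commutator of weight $i+1$ and therefore lies in $K_{i+1}(H)$, so for every $i\geqslant3$ we have $i+1\geqslant4$ and $[x,y,\underbrace{x,\cdots,x}_{i-1}]\in K_4(H)=1$. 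Only the terms $i=1$ and $i=2$ survive, yielding $[x^n,y]=[x,y]^n[x,y,x]^{{n\choose2}}$.

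The second identity is obtained symmetrically by applying Lemma~\ref{p1}(2) with $a=x$, $b=y$ and exponents $1$ and $n$: now the outer product contributes only $i=1$, the surviving factors are indexed by $j=1,2$, and the factors with $j\geqslant3$ again vanish since they lie in $K_4(H)=1$. This gives $[x,y^n]=[x,y]^n[x,y,y]^{{n\choose2}}$.

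I expect no serious obstacle here; the whole argument is a direct application of the collection formula followed by the truncation in $K_4(H)=1$. The one point that genuinely requires attention is justifying the use of Lemma~\ref{p1}, which is why the opening step must establish that $c(H)\leqslant3$ forces $H''=1$. A self-contained alternative, avoiding Lemma~\ref{p1} entirely, would be induction on $n$ via the identity $[x^n,y]=[x^{n-1},y]\,[x^{n-1},y,x]\,[x,y]$ from Lemma~\ref{b1}(2), using that $K_3(H)\leqslant Z(H)$ (because $[K_3(H),H]=K_4(H)=1$) to collect the surviving central factors $[x,y,x]$; the exponent bookkeeping then comes down to the identity $\binom{n-1}{2}+(n-1)=\binom{n}{2}$.
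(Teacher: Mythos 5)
Your proof is correct and follows essentially the same route as the paper's: pass to the proper subgroup $H=\langle x,y\rangle$, use $c(H)\leqslant3$ to conclude $H''\leqslant K_4(H)=1$ so that Lemma~\ref{p1}(2) applies, and observe that all factors of weight $\geqslant4$ in the collection formula vanish. Your version merely spells out the truncation (and an optional inductive alternative) in more detail than the paper does.
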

\begin{proof}
Since $G\in \mathcal{P}_3$ and $\langle x,y\rangle< G$, $c(\langle x,y\rangle)\leqslant3$.
Thus $\langle x,y\rangle$ is metabelian and $[x,y,y,y]=[x,y,x,x]=1$.
Therefore, by Lemma~\ref{p1}$(2)$,
$$[x^n,y]=[x,y]^n[x,y,x]^{(^n_2)} ~{\rm and}~ [x,y^n]=[x,y]^n[x,y,y]^{(^n_2)}.$$
\end{proof}

By Lemma \ref{d1}, $c(G)\leqslant 6$ if $G\in \mathcal{P}_3$. Hence $K_7(G)=1$. The fact that $K_7(G)=1$ will be used frequently in the proof of Lemma \ref{t3}, \ref{t2}, \ref{l31} and the Theorem. For simplicity, we will not mention it explicitly.


\begin{lem}\label{t3} Assume $G$ is a finite $p$-group and $G=\langle a,b\rangle$. If $G\in \mathcal{P}_3$, then
$$K_{6}(G)=\langle[a,b,a,b,a,b]\rangle\lesssim {\rm C}_p.$$
\end{lem}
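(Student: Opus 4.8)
The plan is to work throughout modulo $K_7(G)=1$: since $d(G)=2$, Lemma~\ref{d1} gives $c(G)\le\frac{2\cdot3}{2-1}=6$, so $K_7(G)=1$ and hence $K_6(G)\le \Z(G)$ is abelian. First I would apply Lemma~\ref{p3} to the generating set $\{a,b\}$: because $K_7(G)=1$, $K_6(G)$ is generated by the left-normed commutators $[x_1,\dots,x_6]$ with all $x_i\in\{a,b\}$. Any such commutator with $x_1=x_2$ is trivial, and $[b,a,z_1,\dots,z_4]\equiv[a,b,z_1,\dots,z_4]^{-1}\pmod{K_7}$ by multilinearity (Lemma~\ref{p2}(1)), so it suffices to treat $[a,b,z_1,z_2,z_3,z_4]$. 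Applying Lemma~\ref{c1} with $n=3$ annihilates every such commutator whose tail $z_1,z_2,z_3,z_4$ contains at least three $a$'s or at least three $b$'s; hence only the six commutators whose tail has exactly two $a$'s and two $b$'s can be nontrivial.

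The heart of the first half is to show that these six commutators all coincide, up to inverse, with $g:=[a,b,a,b,a,b]$. Write $c=[a,b]$. The key input is Lemma~\ref{t1}, i.e.\ $[c,a,b]\equiv[c,b,a]\pmod{K_5}$, coming from the metabelianity of $G/K_5$; commutating the element $[c,a,b][c,b,a]^{-1}\in K_5$ with the generators and reducing modulo $K_7$ (using that $K_6$ is central to discard the error terms) yields relations among the six generators, for instance $[a,b,a,b,a,b]=[a,b,b,a,a,b]$ and $[a,b,a,b,b,a]=[a,b,b,a,b,a]$. Combining these with the Jacobi identity (the three-subgroup lemma), which supplies the linear dependencies already present in the free Lie ring, and using Lemma~\ref{c1} to kill the correction terms that acquire a third $a$ or a third $b$, I would reduce all six to $g^{\pm1}$, giving $K_6(G)=\langle g\rangle$ cyclic. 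I expect this relation-chasing — tracking signs and checking that every correction term dies — to be the first obstacle.

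For the order of $K_6(G)$ I would first reduce $g^p$ to a single commutator. Put $w=[a,b,a,b,a]\in K_5(G)\le\Phi(G)$, so $\langle w,b\rangle<G$ and $c(\langle w,b\rangle)\le3$. Since $[w,b,w]\in[K_6(G),K_5(G)]\subseteq K_{11}(G)=1$, Lemma~\ref{l6} gives $g^p=[w,b]^p=[w^p,b]$. By Lemma~\ref{p2}(2), $w^p\equiv[a^p,b,a,b,a]\pmod{K_6(G)}$, and the $K_6$-part is central and so disappears under $[\,\cdot\,,b]$; therefore $g^p=[a^p,b,a,b,a,b]$. A cheap bound is then immediate: $[a^p,b,a^p,b,a^p,b]$ lies in $K_6(\langle a^p,b\rangle)\le K_4(\langle a^p,b\rangle)=1$ because $\langle a^p,b\rangle<G$ has class at most $3$, and by Lemma~\ref{p2}(2) it is congruent to $g^{p^3}$; thus $g^{p^3}=1$ and $K_6(G)$ is cyclic of order dividing $p^3$.

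The decisive step is to sharpen this to $g^p=1$, which gives $K_6(G)\lesssim{\rm C}_p$. This cannot be obtained by confinement alone: every maximal subgroup omits one of $a,b$ modulo $\Phi(G)$, so forcing a weight-$6$ commutator into a proper subgroup requires powering all three occurrences of one letter, recovering only $g^{p^3}=1$. Instead I would attack $[a^p,b,a,b,a,b]$ head-on, expanding $[a^p,b]$ and collecting by repeated use of Lemma~\ref{l6} — whose quadratic $\binom{p}{2}$-terms are the only non-multilinear ingredient at hand — inside the metabelian subgroups $\langle c,a\rangle,\ \langle[a,b,a],b\rangle,\ \langle[a,b,a,b],a\rangle$, and then invoking the relations of the first half together with Lemma~\ref{c1} to show that all surviving terms cancel. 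Establishing this cancellation is the principal difficulty of the proof.
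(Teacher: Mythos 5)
Your reduction to the six commutators $[a,b,z_1,z_2,z_3,z_4]$ whose tail contains exactly two $a$'s and two $b$'s is correct, but the first half of your plan has a genuine gap: Lemma \ref{t1} together with the Jacobi identity cannot identify all six with $[a,b,a,b,a,b]^{\pm1}$. Modulo $K_7(G)=1$ everything is multilinear, so the part of $K_6(G)$ of multidegree $(3,3)$ in $a,b$ is a quotient of the corresponding homogeneous component of the free Lie ring on two generators, which by Witt's formula has rank $3$, not $1$. Among the six left-normed commutators there are exactly three independent Jacobi relations, and two of them are precisely the Lemma~\ref{t1} identifications $[a,b,a,b,z,w]=[a,b,b,a,z,w]$; Lemma \ref{c1} only kills commutators of multidegree $(4,2)$ or $(2,4)$ and contributes nothing here. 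So Jacobi $+$ \ref{t1} $+$ \ref{c1} leaves a quotient of rank up to $3$. The collapse to a cyclic group needs genuinely new relations extracted from the hypothesis $G\in\mathcal{P}_3$, and the paper gets them from the proper subgroups $\langle[a,b],ab\rangle$, $\langle[a,b,b],ab\rangle$, $\langle[a,b,a],ab\rangle$: their class-$\leqslant3$ condition gives $[[a,b],ab,ab,ab]=1$, etc., whose multilinear expansions are the three extra linear relations $(\ref{1-1})$--$(\ref{1-3})$ that, combined with Lemma \ref{t1}, force all six commutators into $\langle[a,b,a,b,a,b]\rangle$. Your plan never applies the class condition to any subgroup containing the diagonal element $ab$, and without some input of this kind the generation statement is out of reach.

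The second half also falls short of a proof. You correctly obtain $g^{p^3}=1$, but the decisive claim $g^p=1$ is left as an acknowledged ``principal difficulty'' with only a proposed line of attack, so the exponent bound is not established. Moreover, your reason for discarding confinement --- that a proper subgroup omits one of $a,b$ modulo $\Phi(G)$, so all three occurrences of one letter would have to be powered --- is mistaken: one may take as a generator of the proper subgroup a commutator that already packages several occurrences of the letters. The paper uses $S=\langle[a,b,a],a^p,b\rangle$, which is proper since $[a,b,a]$ and $a^p$ lie in $\Phi(G)$; as $c(S)\leqslant3$, the weight-four commutator $[[a,b,a],b,a^p,b]$ of elements of $S$ vanishes, and modulo $K_7(G)=1$ it equals $[a,b,a,b,a,b]^p$ because only a single occurrence of $a$ carries the exponent $p$. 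Thus the step you single out as the main obstacle is in fact a one-line confinement argument.
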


\begin{proof}
It suffices to show that $K_{6}(G)\leq\langle[a,b,a,b,a,b]\rangle$ and $[a,b,a,b,a,b]^p=1$.
Let $S=\langle[a,b,a],a^p,b\rangle$. Then $S<G$. Since $G\in \mathcal{P}_3$, $c(S)\leqslant 3$. Thus $[[a,b,a],b,a^p,b]=1$. Moreover, $[a,b,a,b,a,b]^p=1$ by Lemma \ref{p2}$(2)$. Now we prove $K_{6}(G)\leq\langle[a,b,a,b,a,b]\rangle$.

It follows from Lemma \ref{p3} that
$$K_6(G)=\langle[a,b,x_1,x_2,x_3,x_4]\mid x_i\in\{a,b\} ~\mbox{for} ~1\leqslant i\leqslant4\rangle.$$
By Lemma~\ref{c1},~we only need to consider the case that there are two $a$'s and two $b$'s for $x_i$'s.
Let $H=\langle[a,b],ab\rangle$. Then $H<G$. Since $G\in \mathcal{P}_3$, $c(H)\leqslant 3$. Thus
$$[[a,b],ab,ab,ab]=1, ~{\rm and~hence} ~[a,b,ab,ab,ab,a]=1.$$
By Lemma \ref{p2}$(1)$ and Lemma~\ref{c1}, we have
\begin{equation}\label{1-1}
  [a,b,b,b,a,a][a,b,b,a,b,a][a,b,a,b,b,a]=1.
\end{equation}
Let $K=\langle[a,b,b],ab\rangle$. Then $K<G$. Since $G\in \mathcal{P}_3$, $c(K)\leqslant 3$. Thus
$$[[a,b,b],ab,ab,ab]=1.$$
By Lemma \ref{p2}$(1)$ and Lemma~\ref{c1}, we have
\begin{equation}\label{1-2}
  [a,b,b,b,a,a][a,b,b,a,b,a][a,b,b,a,a,b]=1.
\end{equation}
Similarly, let $T=\langle[a,b,a],ab\rangle$. We have
\begin{equation}\label{1-3}
  [a,b,a,a,b,b][a,b,a,b,a,b][a,b,a,b,b,a]=1.
\end{equation}
By $(\ref{1-1})$ and $(\ref{1-2})$,
$$[a,b,a,b,b,a]=[a,b,b,a,a,b].$$
Thus, by Lemma~\ref{t1},
\begin{equation}\label{1-4}
  [a,b,b,a,b,a]=[a,b,a,b,b,a]=[a,b,b,a,a,b]=[a,b,a,b,a,b].
\end{equation}
Moreover, by $(\ref{1-2})$ and $(\ref{1-3})$,
\begin{equation}\label{1-5}
 [a,b,b,b,a,a]=[a,b,a,a,b,b]=[a,b,a,b,a,b]^{-2}.
\end{equation}
Thus $K_{6}(G)\leq\langle[a,b,a,b,a,b]\rangle$.
\end{proof}

\begin{lem}\label{t2} Assume $G$ is a finite $p$-group and $G=\langle a,b\rangle$, where $p\geqslant 3$. If $G\in \mathcal{P}_3$, then
$$K_{5}(G)=\langle[a,b,a,b,a],~[a,b,b,a,b],~K_6(G)\rangle\lesssim {\rm C}_p^3.$$
\end{lem}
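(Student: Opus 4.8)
The plan is to work modulo $K_6(G)$, where the structure is essentially additive, and then settle the exponent separately. First I would invoke Lemma \ref{p3} to write $K_5(G)=\langle [a,b,x_1,x_2,x_3],K_6(G)\mid x_i\in\{a,b\}\rangle$ and use Lemma \ref{c1} (with $n=3$) to discard $[a,b,a,a,a]$ and $[a,b,b,b,b]$, leaving the six generators $[a,b,a,a,b]$, $[a,b,a,b,a]$, $[a,b,b,a,a]$, $[a,b,a,b,b]$, $[a,b,b,a,b]$, $[a,b,b,b,a]$ modulo $K_6(G)$. Since $[K_5(G),G]=K_6(G)$, the quotient $K_5(G)/K_6(G)$ is central in $G/K_6(G)$, so these six commute modulo $K_6(G)$ and I may argue additively; note also that $K_5(G)$ itself is abelian because $[K_5(G),K_5(G)]\subseteq K_{10}(G)=1$, which is what lets me pass from ``exponent $p$'' to ``$\lesssim {\rm C}_p^3$'' at the end.

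Next I would produce enough relations to collapse these six to two. For $i=1,2$ the subgroup $\langle[a,b],ab^i\rangle$ is proper (its image in $G/\Phi(G)$ spans a single line), hence of class $\le 3$, so $[[a,b],ab^i,ab^i,ab^i]=1$; expanding the three occurrences of $ab^i$ by Lemma \ref{p2}(1) and Lemma \ref{c1} gives, modulo $K_6(G)$,
$$\bigl([a,b,a,a,b]\,[a,b,a,b,a]\,[a,b,b,a,a]\bigr)^{i}\bigl([a,b,a,b,b]\,[a,b,b,a,b]\,[a,b,b,b,a]\bigr)^{i^{2}}\equiv 1\pmod{K_6(G)}.$$
Here $p\ge 3$ is essential: the cases $i=1,2$ give a $2\times2$ system whose determinant $2$ is a unit mod $p$, forcing both bracketed products to be trivial modulo $K_6(G)$; for $p=2$ the element $ab^2$ degenerates to $a$ and the second relation is lost. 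Commuting the congruence of Lemma \ref{t1} with $a$ and with $b$ yields $[a,b,a,b,a]\equiv[a,b,b,a,a]$ and $[a,b,a,b,b]\equiv[a,b,b,a,b]$ modulo $K_6(G)$. Combining, every one of the six generators is congruent to a power of $[a,b,a,b,a]$ or $[a,b,b,a,b]$ modulo $K_6(G)$, so $K_5(G)=\langle[a,b,a,b,a],[a,b,b,a,b],K_6(G)\rangle$ and $K_5(G)/K_6(G)$ has rank at most $2$.

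It remains to bound the exponent, where the main work lies. For the generators with a single loose $a$-entry I would use $S=\langle[a,b],a^p,b\rangle$, and for those with a single loose $b$-entry the symmetric $T=\langle[a,b],b^p,a\rangle$; both are proper, hence of class $\le 3$. Inserting $a^p$ (resp. $b^p$) into that single slot realises each generator's $p$-th power, via Lemma \ref{p2}(2), as a weight-$4$ commutator of the generators of $S$ (resp. $T$), which vanishes; thus every $p$-th power lies in $K_6(G)$ and $K_5(G)/K_6(G)$ is elementary abelian. To upgrade this to exact triviality I would evaluate, say, $[a,b,a,b^p,a]$ without truncation: Lemma \ref{p1} (applied in the metabelian proper subgroups $\langle[a,b,a],b\rangle$ and $\langle[a,b,a,b],a\rangle$) expresses it as $[a,b,a,b,a]^p$ times a $\binom{p}{2}$-th power of the element $[a,b,a,b,b,a]\in K_6(G)$. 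Since $K_6(G)$ has exponent $p$ by Lemma \ref{t3} and $p\mid\binom{p}{2}$ for odd $p$, that correction is trivial, so $[a,b,a,b,a]^p=[a,b,a,b^p,a]=1$; the symmetric computation in $T$ gives $[a,b,b,a,b]^p=1$.

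Putting these together, $K_5(G)$ is abelian and generated by $[a,b,a,b,a]$, $[a,b,b,a,b]$, and the generator $[a,b,a,b,a,b]$ of $K_6(G)$, each of order dividing $p$; hence $K_5(G)\lesssim{\rm C}_p^3$. I expect the exact-exponent step to be the principal obstacle: passing from ``$p$-th power in $K_6(G)$'' to ``$p$-th power trivial'' forces one to carry the non-truncated collection formula and to rely simultaneously on the bound $K_6(G)\lesssim{\rm C}_p$ from Lemma \ref{t3} and on $p\mid\binom{p}{2}$, which is the precise point where the hypothesis $p\ge 3$ is used.
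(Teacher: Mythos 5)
Your proposal is correct and follows essentially the same route as the paper: the same reduction of $K_5(G)$ modulo $K_6(G)$ to six generators via Lemma \ref{p3} and Lemma \ref{c1}, the same relations from $c(\langle[a,b],ab^i\rangle)\leqslant 3$ (the paper's ``arbitrariness of $i$'' is exactly your $2\times 2$ Vandermonde system) and from Lemma \ref{t1}, and the same exponent computation $1=[[a,b],a,b^p,a]=[a,b,a,b,a]^p[a,b,a,b,b,a]^{\binom{p}{2}}=[a,b,a,b,a]^p$ inside the proper subgroup $\langle[a,b],a,b^p\rangle$. The only difference is that your preliminary pass showing $p$-th powers lie in $K_6(G)$ is subsumed by, and hence redundant with, your ``exact triviality'' step, which is the paper's actual argument.
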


\begin{proof} Notice that $[K_5(G),K_5(G)]\leq K_{10}(G)=1$. We have $K_5(G)$ is abelian.  It suffices to show that  $K_{5}(G)\leq\langle[a,b,a,b,a],~[a,b,b,a,b],~K_6(G)\rangle$ and $[a,b,a,b,a]^p=[a,b,b,a,b]^p=1$.

Firstly, it follows from $G\in \mathcal{P}_3$ that $c(\langle [a,b],a,b^p\rangle)\leqslant 3$.
By Lemma \ref{l6}, Lemma \ref{b1}$(2)$ and Lemma \ref{t3},
\begin{equation*}
  \begin{aligned}
    1 =& [[a,b],a,b^p,a]=[[a,b,a],b^p,a]=[[a,b,a,b]^p[a,b,a,b,b]^{(^p_2)},a] \\
      =& [[a,b,a,b]^p,a] [[a,b,a,b,b]^{(^p_2)},a]=[a,b,a,b,a]^p[a,b,a,b,b,a]^{(^p_2)}\\
      =& [a,b,a,b,a]^p.
  \end{aligned}
\end{equation*}
 Similarly, $[a,b,b,a,b]^p=1$.

\medskip
Next \f by Lemma \ref{p3}, $$K_5(G)=\langle[a,b,x_1,x_2,x_3],K_6(G)\mid x_i\in\{a,b\}~\mbox{for}~1\leqslant i\leqslant3\rangle.$$
By Lemma~\ref{c1}, $$[a,b,a,a,a]=[a,b,b,b,b]=1.$$
By Lemma~\ref{t1},
\begin{equation}\label{2-1}
\begin{split}
     [a,b,b,a,a]&\equiv[a,b,a,b,a]~(\mod K_6(G)), \\
     [a,b,a,b,b]&\equiv[a,b,b,a,b]~(\mod K_6(G)).
\end{split}
\end{equation}
Since $G\in \mathcal{P}_3$, $c(\langle [a,b],ab^i\rangle)\leqslant 3$~for any integer $i$. Thus,
$[[a,b],ab^i,ab^i,ab^i]=1.$
By Lemma \ref{p2}, we have
$$([a,b,a,a,b][a,b,a,b,a][a,b,b,a,a])^i([a,b,b,b,a][a,b,b,a,b][a,b,a,b,b])^{i^2}\equiv 1~(\mod K_6(G)).$$
 Since $p\geqslant 3$, by the arbitrariness of $i$, we have
$$[a,b,a,a,b][a,b,a,b,a][a,b,b,a,a]\equiv 1~(\mod K_6(G)),$$
$$[a,b,b,b,a][a,b,b,a,b][a,b,a,b,b]\equiv 1~(\mod K_6(G)).$$
It follows from $(\ref{2-1})$ that
$$[a,b,a,a,b]\equiv [a,b,a,b,a]^{-2}~(\mod K_6(G)),$$
$$[a,b,b,b,a]\equiv [a,b,b,a,b]^{-2}~(\mod K_6(G)).$$
Thus $K_{5}(G)\leq\langle[a,b,a,b,a],~[a,b,b,a,b],~K_6(G)\rangle$.
\end{proof}

\begin{lem}\label{l31}  Assume $G$ is a finite $p$-group with $d(G)=2$ and $p\geqslant 3$. If $G\in \mathcal{P}_3$, then for any$~x,y,z,t\in G$,
$$[x^p,y,z,t]=[x,y,z,t]^p[x,y,x,x,z,t]^{(^p_3)}~{\rm and}~[y,x^p,z,t]=[y,x,z,t]^p[y,x,x,x,z,t]^{(^p_3)}.$$
In particular, if $z=x$ or $t=x$, then
$$[x^p,y,z,t]=[x,y,z,t]^p~{\rm and}~[y,x^p,z,t]=[y,x,z,t]^p.$$
\end{lem}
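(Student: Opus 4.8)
The plan is to compute $[x^p,y,z,t]$ by first determining $[x^p,y]$ modulo $K_5(G)$ and then bracketing the result with $z$ and $t$. Since $d(G)=2$ and $K_5(G/K_5(G))=1$, Lemma~\ref{l3} shows $G/K_5(G)$ is metabelian, so the collection formula Lemma~\ref{p1}$(2)$ applies in that quotient; all of its factors of weight $\ge 5$ vanish there, giving
\[
[x^p,y]\equiv [x,y]^p\,[x,y,x]^{\binom{p}{2}}\,[x,y,x,x]^{\binom{p}{3}}\pmod{K_5(G)}
\]
and, symmetrically, $[y,x^p]\equiv[y,x]^p[y,x,x]^{\binom{p}{2}}[y,x,x,x]^{\binom{p}{3}}\pmod{K_5(G)}$. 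Because $K_7(G)=1$, the $K_5(G)$-tail is harmless: if $[x^p,y]=wr$ with $r\in K_5(G)$ and $w$ the displayed word, then $[r,z]\in K_6(G)$, $[r,z,t]\in K_7(G)=1$, and the cross terms sit even higher by the standard inclusion $[K_i(G),K_j(G)]\subseteq K_{i+j}(G)$, whence $[x^p,y,z,t]=[w,z,t]$.

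Thus I reduce to bracketing the explicit word $w=ABC$, with $A=[x,y]^p\in K_2(G)$, $B=[x,y,x]^{\binom{p}{2}}\in K_3(G)$, $C=[x,y,x,x]^{\binom{p}{3}}\in K_4(G)$. Expanding $[ABC,z,t]$ by Lemma~\ref{b1}$(2)$ and Lemma~\ref{p2}, and discarding every commutator that the weight count forces into $K_7(G)=1$, the only surviving contributions are $[A,z,t]$, $[B,z,t]$, $[C,z,t]$, which Lemma~\ref{p2} evaluates to
\[
[A,z,t]=[x,y,z,t]^p\,[[x,y,z],[x,y],t]^{\binom{p}{2}},\qquad [B,z,t]=[x,y,x,z,t]^{\binom{p}{2}},\qquad [C,z,t]=[x,y,x,x,z,t]^{\binom{p}{3}}.
\]
Since the three factors beyond the leading one lie in $K_5(G)$ or $K_6(G)$, which commute with $K_4(G)$ modulo $K_7(G)=1$, I may multiply them in any order to obtain
\[
[x^p,y,z,t]=[x,y,z,t]^p\,[x,y,x,z,t]^{\binom{p}{2}}\,[[x,y,z],[x,y],t]^{\binom{p}{2}}\,[x,y,x,x,z,t]^{\binom{p}{3}}.
\]

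The crux is to kill the two middle factors. Here I use that $p$ is odd, so $\binom{p}{2}=p(p-1)/2$ is a multiple of $p$, together with the exponent bounds already established: $[x,y,x,z,t]\in K_5(G)$ has order dividing $p$ by Lemma~\ref{t2}, and $[[x,y,z],[x,y],t]\in K_6(G)$ has order dividing $p$ by Lemma~\ref{t3}. Hence both $\binom{p}{2}$-th powers are trivial and we are left with $[x^p,y,z,t]=[x,y,z,t]^p[x,y,x,x,z,t]^{\binom{p}{3}}$; the second identity follows from the symmetric expansion of $[y,x^p,z,t]$. This is exactly where $p\ge3$ enters, since for $p=2$ neither $p\mid\binom{p}{2}$ nor the exponent bound of Lemma~\ref{t2} holds. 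For the ``in particular'' clause, set $H=\langle x,G'\rangle$; this is a proper normal subgroup ($G/G'$ is non-cyclic because $d(G)=2$), so $c(H)\le3$ and $K_4(H)=1$. As $[x,y]\in G'\subseteq K_1(H)$, bracketing with $x$ raises the $H$-level while bracketing with any element of $G$ preserves it (all $K_i(H)$ being normal in $G$); thus when $z=x$ or $t=x$ the word $[x,y,x,x,z,t]$ accumulates three brackets with $x$ and lands in $K_4(H)=1$, and likewise $[y,x,x,x,z,t]=1$, so the $\binom{p}{3}$-terms drop out.

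I expect the main obstacle to be the bookkeeping in the expansion of $[ABC,z,t]$: one must verify that every discarded commutator genuinely lies in $K_7(G)=1$ and that the three surviving brackets carry no spurious corrections, which rests on a disciplined use of the weight inequality $[K_i(G),K_j(G)]\subseteq K_{i+j}(G)$ at each step.
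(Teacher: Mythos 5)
Your proof is correct and follows essentially the same route as the paper's: expand $[x^p,y]$ via the metabelian collection formula modulo $K_5(G)$, push the bracketing with $z$ and $t$ through using $K_7(G)=1$, and kill the two $\binom{p}{2}$-terms because $p\mid\binom{p}{2}$ for odd $p$ while $K_5(G)$ and $K_6(G)$ have exponent dividing $p$ by Lemmas \ref{t2} and \ref{t3}. The only cosmetic differences are that you re-derive the vanishing of $[x,y,x,x,z,t]$ for $z=x$ or $t=x$ directly via $H=\langle x,G'\rangle$ instead of citing Lemma \ref{c1}, and that the precise evaluation of $[[x,y]^p,z]$ with its $\binom{p}{2}$ correction really comes from Lemma \ref{l6} rather than Lemma \ref{p2}.
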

\begin{proof}

By Lemma~\ref{l3}, $G/K_5(G)$ is metabelian.
Then, by Lemma \ref{p1}$(2)$,
$$[x^p,y]\equiv[x,y]^p[x,y,x]^{(^p_2)}[x,y,x,x]^{(^p_3)}~(\mod K_5(G)).$$
Let $u=[x,y]^p,~v=[x,y,x]^{(^p_2)},~w=[x,y,x,x]^{(^p_3)}$. Then
$$[x^p,y,z] \equiv [uvw,z]~(\mod K_6(G)).$$
By Lemma \ref{b1}$(2)$,
$$[x^p,y,z] \equiv [u,z][u,z,vw][v,z][v,z,w][w,z]~(\mod K_6(G)).$$
Notice that $[u,z,vw],[v,z,w]\in K_6(G)$. We have

$$[x^p,y,z] \equiv [u,z][v,z][w,z]~(\mod K_6(G)).$$
That is
$$[x^p,y,z] \equiv [[x,y]^p,z][[x,y,x]^{(^p_2)},z][[x,y,x,x]^{(^p_3)},z]~(\mod K_6(G)).$$
By Lemma~\ref{l6},
$$[x^p,y,z] \equiv [x,y,z]^p[x,y,z,[x,y]]^{(^p_2)}[x,y,x,z]^{(^p_2)}[x,y,x,x,z]^{(^p_3)}~(\mod K_6(G)).$$
It follows that
$$[x^p,y,z,t]=[[x,y,z]^p[x,y,z,[x,y]]^{(^p_2)}[x,y,x,z]^{(^p_2)}[x,y,x,x,z]^{(^p_3)},t].$$
Similar to the above calculation, we have
$$[x^p,y,z,t]=[x,y,z,t]^p[x,y,z,[x,y],t]^{(^p_2)}[x,y,x,z,t]^{(^p_2)}[x,y,x,x,z,t]^{(^p_3)}.$$
Since $p\geqslant 3$, by Lemma \ref{t3} and Lemma \ref{t2},
$$[x^p,y,z,t]=[x,y,z,t]^p[x,y,x,x,z,t]^{(^p_3)}.$$
Similarly,
$$[y,x^p,z,t]=[y,x,z,t]^p[y,x,x,x,z,t]^{(^p_3)}.$$
If $z=x$ or $t=x$, then, by Lemma \ref{c1},
$$[x,y,x,x,z,t]=[y,x,x,x,z,t]=1.$$
Thus,
$$[x^p,y,z,t]=[x,y,z,t]^p~{\rm and}~[y,x^p,z,t]=[y,x,z,t]^p.$$
\end{proof}

\noindent {\bf Proof of Theorem.}
Let $G=\langle a,b\rangle$.
It suffices to show that $K_6(G)=1$. 

\medskip
Assume $p=2$.
Let $H=\langle a^2,b,[a,b]\rangle$. Then $H<G$. Since $G\in \mathcal{P}_3$, $c(H)\leqslant 3$. Then by Lemma \ref{b1}$(2)$,
$$1=[a^2,b,[a,b],b]=[[a,b][a,b,a][a,b],[a,b],b]=[a,b,a,[a,b],b].$$
Let $x=[a,b,a]$. Then $[x,[a,b],b]=1$. From Witt Formula and Lemma \ref{b1}$(1)$,
\begin{equation*}
  \begin{aligned}
1=&[x,a,b^x][b,x,a^b][a,b,x^a]\\
=&[x,a,b[b,x]][b,x,a[a,b]][a,b,x[x,a]]\\
=&[x,a,[b,x]][x,a,b][x,a,b,[b,x]][b,x,[a,b]][b,x,a]\\
&[b,x,a,[a,b]][a,b,[x,a]][a,b,x][a,b,x,[x,a]].
  \end{aligned}
\end{equation*}
Notice that $[x,a,[b,x]]=[x,a,b,[b,x]]=[b,x,a,[a,b]]=[a,b,x,[x,a]]=1$. We have
\begin{equation*}
  \begin{aligned}
1=&[x,a,b][b,x,[a,b]][b,x,a][a,b,[x,a]][a,b,x]\\
=&[x,a,b][b,x,[a,b]][b,x,a][[x,a],[a,b]]^{-1}[a,b,x].
  \end{aligned}
\end{equation*}
Since $G\in \mathcal{P}_3$, $c(\langle [a,b],a\rangle)\leqslant 3$. Thus
$$[[a,b],a,a,[a,b]]=1,~ {\rm that~ is}~ [[x,a],[a,b]]=1.$$
Moreover,
$$1=[x,a,b][b,x,[a,b]][b,x,a][a,b,x].$$
And then, by Lemma \ref{b1}$(2)$,
\begin{equation*}
  \begin{aligned}
  1 =&[[x,a,b][b,x,[a,b]][b,x,a][a,b,x],b]\\
    =&[x,a,b,b][b,x,[a,b],b][b,x,a,b][a,b,x,b].
  \end{aligned}
\end{equation*}
By Lemma \ref{b1}$(3)$, $[a,b,x,b]=[x,[a,b],b]^{-1}=1$.
Notice that $[b,x,[a,b],b]=1$.  We have
$$1=[x,a,b,b][b,x,a,b]=[x,a,b,b][x,b,a,b]^{-1}.$$
Thus $[x,a,b,b]=[x,b,a,b]$. That is,
$$[a,b,a,a,b,b]=[a,b,a,b,a,b].$$
It follows from formula $(\ref{1-5})$ that $[a,b,a,b,a,b]=1$.
Then $K_6(G)=1$ by Lemma~\ref{t3}.

\medskip
Assume $p=3$.
Since $G\in \mathcal{P}_3$, by formulae $(\ref{1-4})$, $(\ref{1-5})$ and Lemma \ref{t3}, we have
\begin{equation}\label{7}
\begin{split}
     [a,b,a,a,b,b]&=[a,b,b,b,a,a]=[a,b,a,b,a,b] \\
     =[a,b,a,b,b,a]&=[a,b,b,a,a,b]=[a,b,b,a,b,a].
\end{split}
\end{equation}
Let $H=\langle a^3,ab\rangle$. Then $H<G$. Since $G\in \mathcal{P}_3$, $c(H)\leqslant 3$. Thus $[a^3,ab,ab,ab]=1$. By Lemma \ref{b1}$(1)(2)$,
\begin{equation}\label{4-1}
  \begin{aligned}
    1 =& [a^3,ab,ab,ab]=[a^3,b,ab,ab]=[[a^3,b,b][a^3,b,a][a^3,b,a,b],ab] \\
      =& [a^3,b,b,b][a^3,b,a,b][a^3,b,a,b,b][a^3,b,b,a][a^3,b,a,a][a^3,b,a,b,a]\\
      &[a^3,b,b,a,b][a^3,b,a,a,b][a^3,b,a,b,a,b].
  \end{aligned}
\end{equation}
Since $G\in \mathcal{P}_3$, $c(\langle a^3,b\rangle)\leqslant 3$. Thus
\begin{equation*}\label{4-2}
[a^3,b,b,b]=1.
\end{equation*}
By Lemma \ref{l31},
$$[a^3,b,a,b]=[a,b,a,b]^3,~[a^3,b,b,a]=[a,b,b,a]^3~{\rm and}~ [a^3,b,a,a]=[a,b,a,a]^3.$$
Thus, by Lemma \ref{l6} and Lemma \ref{t2},
$$[a^3,b,a,b,b]=[[a,b,a,b]^3,b]=[a,b,a,b,b]^3=1.$$
Similarly,
$$[a^3,b,a,b,a]=[a^3,b,b,a,b]=[a^3,b,a,a,b]=1.$$
Moreover,
$$[a^3,b,a,b,a,b]=1.$$
Hence by $(\ref{4-1})$, we have
$$[a,b,a,b]^3[a,b,b,a]^3[a,b,a,a]^3=1.$$
By Lemma \ref{t1} and Lemma \ref{t2}, $[a,b,a,b]^3=[a,b,b,a]^3$. Thus
$$[a,b,a,b]^6[a,b,a,a]^3=1.$$
Similarly, let $K=\langle a^3,ab^{-1}\rangle$. We have
$$[a,b,a,b]^6[a,b,a,a]^{-3}=1.$$
Thus, $[a,b,a,b]^3=1$ and $[a,b,a,a]^3=1$.
Let $T=\langle a,b^3\rangle$. Then $T<G$. Since $G\in \mathcal{P}_3$, $c(T)\leqslant3$. It follows from Lemma \ref{l31} that
$$1=[a,b^3,a,a]=[a,b,a,a]^3[a,b,b,b,a,a].$$
Thus $[a,b,b,b,a,a]=1$. Then we have $K_6(G)=1$ by (\ref{7}) and Lemma \ref{t3}.\qed

\medskip
Finally, we give examples to show that the bound for the class of the $p$-groups in Theorem is best.

\begin{exam}\label{e3}  {\rm Let $G=\lg a,b\mid a^{2^n}=b^{2^3}=c^{2^2}=d_1^{2^2}=d_2^{2^2}=e_1^{2^2}=e_2^{2}=1, ~[a,b]=c, ~[c,a]=d_1, ~[c,b]=d_2,~[d_1,a]=e_1,~[d_1,b]=e_2,~[d_2,a]=e_1^2e_2,~[d_2,b]=1, ~[e_2,a]=e_1^2, ~[e_1,a]=[e_1,b]=[e_2,b]=[e_3,a]=[e_3,b]=1\rg$, where $n\geqslant3$. It is straightforward to get that $c(G)=5$ and $G\in\mathcal{P}_3$.}
\end{exam}

\begin{exam}\label{e4}  {\rm Let $G=\lg a,b\mid a^{3^n}=b^3=c^3=d_1^{3}=d_2^{3}=e_1^{3}=e_2^{3}=f^3=1, ~[a,b]=c, ~[c,a]=d_1, ~[c,b]=d_2,
~[d_1,a]=e_1, ~[d_1,b]=[d_2,a]=e_2, ~[d_2,b]=1, ~[e_1,b]=[e_2,a]=f, ~[e_1,a]=[e_2,b]=[f,a]=[f,b]=1\rg$, where $n\geqslant2$. It is straightforward to get that $c(G)=5$ and $G\in\mathcal{P}_3$.}
\end{exam}


\begin{thebibliography}{999}

\bibitem{ES}
M.J. Evans and B.G. Sandor, {Groups of class $2n$ in which all proper subgroups
have class at most $n$}, {\it J. Algebra}, {\bf 498}(2018), 165-177.

\bibitem{CK}
C.K. Gupta, {A bound for the class of certain nilpotent groups}, {\it J. Austral. Math. Soc.,} {\bf 5}(1965), 506-511.

\bibitem{Hu}
B. Huppert,  {\em Endliche Gruppen I},  { New York-Berlin, Springer}, 1967.


\bibitem{B2}
P.J. Li, H.P. Qu and J.W. Zeng, {Finite $p$-groups whose proper subgroups are of class $\leqslant n$}, {\it J. Algebra Appl.,} {\bf16:1}(2017), 1750014, 8pp.

\bibitem{p>5}
I.D. Macdonald, Generalizations of a classical theorem about nilpotent groups, {\it Illinois J. Math.,} {\bf 8}(1964), 556-570.

\bibitem{NW}
M.F. Newman and James Wiegold, Groups with many nilpotent subgroups, {\it Arch. Math.,} {\bf 15}(1964), 241-250.

\bibitem{minimal nonabelian}
L. R\'{e}dei, Das schiefe Produkt in der Gruppentheorie, {\it Comment. Math. Helv.}, {\bf 20}(1947), 225-264.

\bibitem{Xu}
M.Y. Xu, {A theorem on metabelian $p$-groups and some consequences}, {\it Chinese Ann. Math. Ser. B,} {\bf5:1}(1984), 1-6. 

\bibitem{Qu}
M.Y. Xu and H.P. Qu, {\em Finite $p$-groups}, { Peking University Press}, 2010.
(in Chinese)

\bibitem{Za}
D.I. Zaitsev, Stably nilpotent groups, {\it Mat. Zametki}, {\bf 2}(1967), 337-346. English translation in: {\it Math. Notes}, {\bf 2:4}(1967), 690-694.

\end{thebibliography}
\end{document}